\newcommand{\GAM}{\Gamma\!\!\!\Gamma}
\newcommand{\PHI}{\Phi\!\!\!\Phi}
\theoremstyle{plain}
\newtheorem{thm}{Theorem}
\theoremstyle{definition}
\newtheorem{nsl}{Corollary}
\theoremstyle{remark}
\newtheorem{prop}{Proposition}
\begin{document}

\begin{center}

{\vspace{5mm}\fontsize{20pt}{12pt}\selectfont\textbf{Filtering of stationary \\ Gaussian statistical experiments}} 
\vskip12pt
\textbf{V.S.Koroliuk, D.Koroliouk}
\end{center}

\vskip18pt
\par\textbf{Abstract.}
This article proposes a new filtering model for stationary Gaussian Markov statistical experiments, given by diffusion-type difference stochastic equations.

\vskip18pt
\par\textbf{Key words:} difference stochastic equation, discrete Markov diffusion, filtering equation, statistical estimates, filtering error.

\section{Stationary statistical experiments}

The \textit{statistical experiment (SE)} is defined as the averaged sums:
\begin{equation}
\label{eq1}
S_N(k)=\frac{1}{N}\sum^N_{r=1}\delta_r(k), \ k\geq0,
\end{equation}
in which the random variables $\delta_r(k)$, $1\leq r\leq N$, $k\geq0$, are equally distributed and independent for each fixed $k\geq 0$, which take binary values 0 or 1.

In particular, let us consider
\begin{equation}
\nonumber
\delta_r(k)=I(A) =
\begin{cases} 1, & \text{if event $A$ occurs;} \\
              0, & \text{if event $A$ does not occur.}
\end{cases} \ , \ \ 1\leq r\leq N \ , \ \  k\geq0.
\end{equation}
In this case, the random amount $S_N(k)$, $k\geq0$, describes the relative frequencies of presence of the attribute $A$ in a sample of fixed volume $N$ at each time instant $k\geq0$.

Introducing the normalized fluctuations $\zeta_N(k):=\sqrt{N}(S_N(k)-\rho)$, where $\rho$ be the equilibrium of SE \cite{DK_6, DK_05}, one gets an important representation of SEs.

Namely, some natural conditions \cite[Proposition 5.2]{DK_8}, the SE \eqref{eq1} has the following diffusion approximation:
\begin{equation}
\label{eq2}
\Delta\zeta(k+1)=-V\zeta(k)+\sigma\Delta W(k+1) \ , \ \ 0\leq V\leq 2 \ , \ \ \sigma\geq0,
\end{equation}
where the increments $\Delta\zeta(k+1):=\zeta(k+1)-\zeta_N(k)$ and $\Delta W(k+1)$ are the standard normally distributed martingale - differences.
The solution of the difference stochastic equation \eqref{eq2} is called \textit{discrete Markov diffusion} (DMD) \cite{DK_19}.
\vskip10pt
The next theorem \cite{DK_13} gives the necessary and sufficient conditions of the stationarity, in wide sense, of the DMD \eqref{eq2}.
\begin{thm}
(Theorem on stationarity).
The DMD \eqref{eq2} is a stationary random sequence in wide sense if and only if the following relations take place:
\begin{equation}
E\zeta(0)=0 \ , \ \ E\zeta^2(0)=R_\zeta=\sigma^2/(2V-V^2).
\label{eq3}
\end{equation}
\end{thm}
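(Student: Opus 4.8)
The plan is to exploit the fact that the difference equation \eqref{eq2} is nothing but an autoregressive recursion of order one. Writing $\Delta\zeta(k+1)=\zeta(k+1)-\zeta(k)$, equation \eqref{eq2} becomes $\zeta(k+1)=(1-V)\zeta(k)+\sigma\Delta W(k+1)$, and the algebraic observation to keep in mind is that the denominator in \eqref{eq3} factors as $2V-V^2=1-(1-V)^2$. I will also use throughout that the $\Delta W(k+1)$ are martingale-differences, so with respect to the natural filtration $\mathcal F_k$ one has $E[\Delta W(k+1)\mid\mathcal F_k]=0$ and $E[\Delta W(k+1)]^2=1$, while $\zeta(k)$ is $\mathcal F_k$-measurable; consequently $E[\zeta(k)\,\Delta W(k+j)]=0$ for every $j\ge1$.

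For the necessity I would first take expectations in the recursion to get $E\zeta(k+1)=(1-V)E\zeta(k)$; wide-sense stationarity forces $E\zeta(k)\equiv m$ with $m=(1-V)m$, hence $m=0$ for $0<V<2$, which is the first relation in \eqref{eq3}. Next, squaring the recursion and taking expectations, the cross term drops out by the orthogonality above, leaving $E\zeta^2(k+1)=(1-V)^2E\zeta^2(k)+\sigma^2$. Constancy of the second moment then gives $R_\zeta=(1-V)^2R_\zeta+\sigma^2$, i.e. $R_\zeta=\sigma^2/(1-(1-V)^2)=\sigma^2/(2V-V^2)$, the second relation in \eqref{eq3}.

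For the sufficiency, assuming \eqref{eq3}, I would argue by induction that $E\zeta(k)=0$ and $E\zeta^2(k)=R_\zeta$ for all $k\ge0$, since $0$ and $R_\zeta$ are precisely the fixed points of the two recursions just obtained. It then remains to show the covariance depends only on the lag: iterating the recursion gives $\zeta(k+n)=(1-V)^n\zeta(k)+\sigma\sum_{j=1}^{n}(1-V)^{n-j}\Delta W(k+j)$, and multiplying by $\zeta(k)$ and taking expectations kills every term of the sum by orthogonality, so $E[\zeta(k)\zeta(k+n)]=(1-V)^n R_\zeta$, independent of $k$. Together with the constant mean this is exactly wide-sense stationarity, and as a by-product the correlation function is $(1-V)^{|n|}R_\zeta$.

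The two moment recursions are routine; the one point that needs care is the justification that $\zeta(k)$ is uncorrelated with the future noise increments $\Delta W(k+j)$, $j\ge1$ — this is where the martingale-difference hypothesis and the adaptedness of the DMD are used, and it is what makes both the cross term in the second-moment identity and all the sum terms in the covariance identity vanish. The degenerate endpoints $V=0$ and $V=2$, where $2V-V^2=0$, should be treated separately, since there \eqref{eq3} is vacuous and stationarity either holds trivially or forces $\sigma=0$.
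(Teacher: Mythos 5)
Your argument is correct: rewriting \eqref{eq2} as the first-order autoregression $\zeta(k+1)=(1-V)\zeta(k)+\sigma\Delta W(k+1)$ and running the two moment recursions, with the cross terms killed by the martingale-difference property, is the standard (and essentially the only) way to prove this, and your treatment of the degenerate endpoints $V=0$, $V=2$ is a sensible addition. Note that the paper itself gives no proof of this theorem — it is quoted from reference [5] — so there is nothing in the text to compare against, but your derivation matches what that reference establishes and is complete as written.
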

\vskip10pt

Now consider a stationary, in wide sense, two-component random sequence\\ $\Bigl(\zeta(k),\ \Delta\zeta(k+1)\Bigr)$, $k\geq0$, with the following joint covariances:
\begin{equation}
\label{eq4}
\begin{split}
&R_\zeta=E\bigl[\zeta(k)\bigr]^2 \ , \ \ R^0_\zeta= E\bigl[\zeta(k)\Delta\zeta(k+1)\bigr] \ , \ \ R^\Delta_\zeta=E\bigl[\Delta\zeta(k+1)\bigl]^2.
\end{split}
\end{equation}
In filtering problem of Gaussian stationary DMD, the equivalence formulated in the following theorem (see \cite{DK_13}) is essentially used.
\begin{thm}\label{thm2}
(Theorem on equivalence).
Let the two-component Gaussian Markov random sequence \ $\bigl(\zeta(k),\ \Delta\zeta(k+1)\bigr)$, $k\geq0$, with the mean value $E[\zeta(k)]=0$, $k\geq0$, and the joint covariances \eqref{eq4} that satisfy the stationarity condition
\begin{equation}\label{eq5}
\sigma^2=\bigl(2V-V^2\bigr)R_\zeta.
\end{equation}
Then the random sequence $\Bigl(\zeta(k),\ \Delta\zeta(k+1)\Bigr)$, $k\geq0$, is a solution of the stochastic difference equation \eqref{eq2}, that is a DMD.
\end{thm}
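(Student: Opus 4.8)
The plan is to read the difference equation \eqref{eq2} off the covariance data by first extracting the autoregressive structure of the scalar sequence $\zeta(k)$ from Gaussianity and the Markov property, and then identifying the one‑step innovation with $\sigma\,\Delta W(k+1)$. Throughout assume $R_\zeta>0$ (otherwise $\zeta\equiv0$ and everything is trivial). Let $\mathcal F_k=\sigma\bigl(\zeta(0),\dots,\zeta(k)\bigr)$. Since the process $\bigl(\zeta(k),\Delta\zeta(k+1)\bigr)_{k\ge0}$ is jointly Gaussian, $E[\zeta(k+1)\mid\mathcal F_k]$ is an affine function of the conditioning variables; the Markov property collapses it to a function of $\zeta(k)$ alone, and the zero‑mean hypothesis removes the constant term, so $E[\zeta(k+1)\mid\mathcal F_k]=a\,\zeta(k)$ with $a:=E[\zeta(k)\zeta(k+1)]/R_\zeta=1+R^0_\zeta/R_\zeta$, the coefficient being $k$‑independent by stationarity. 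Likewise the conditional law of $\zeta(k+1)$ given $\mathcal F_k$ is Gaussian with a non‑random, $k$‑independent variance $s^2\ge0$. Putting $V:=1-a=-R^0_\zeta/R_\zeta$ and $\eta(k+1):=\zeta(k+1)-a\zeta(k)=\Delta\zeta(k+1)+V\zeta(k)$, we obtain that, conditionally on $\mathcal F_k$, $\eta(k+1)\sim\mathcal N(0,s^2)$; hence the $\eta(k+1)$ are independent $\mathcal N(0,s^2)$ variables and $\{\eta(k+1)\}_{k\ge0}$ is a martingale‑difference sequence with respect to $\{\mathcal F_k\}$.

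Next I would pin down $s^2$ and $V$. Taking variances in $\zeta(k+1)=a\zeta(k)+\eta(k+1)$ and using $\eta(k+1)\perp\zeta(k)$ together with $E\zeta^2(k+1)=E\zeta^2(k)=R_\zeta$ gives $R_\zeta=a^2R_\zeta+s^2$, i.e. $s^2=(1-a^2)R_\zeta=(2V-V^2)R_\zeta$. Comparing with the stationarity hypothesis \eqref{eq5} yields $s^2=\sigma^2$. Moreover, by Cauchy--Schwarz $|a|=|E[\zeta(k)\zeta(k+1)]|/R_\zeta\le1$, so $V=1-a\in[0,2]$, which is exactly the admissible range appearing in \eqref{eq2}.

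It remains to conclude. If $\sigma>0$, set $\Delta W(k+1):=\eta(k+1)/\sigma$; by the first two steps the $\Delta W(k+1)$ are standard normal, mutually independent, and form a martingale‑difference sequence with respect to $\{\mathcal F_k\}$, while $\Delta\zeta(k+1)=-V\zeta(k)+\sigma\,\Delta W(k+1)$ is precisely \eqref{eq2}; thus $\bigl(\zeta(k),\Delta\zeta(k+1)\bigr)$ is a DMD. In the degenerate case $\sigma=0$ one has $s^2=0$, hence $\eta(k+1)=0$ a.s. and $\Delta\zeta(k+1)=-V\zeta(k)$ with $V\in\{0,2\}$; then \eqref{eq2} holds with $\Delta W(k+1)$ taken to be any standard‑normal martingale‑difference sequence adjoined to the model, since it is multiplied by $\sigma=0$.

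I expect the main obstacle to be making the autoregressive reduction fully rigorous, namely that $E[\zeta(k+1)\mid\mathcal F_k]$ is genuinely \emph{linear} in $\zeta(k)$ and that the conditional variance is \emph{non‑random} and \emph{time‑homogeneous}. This is where joint Gaussianity (linearity and normality of Gaussian conditional laws), the Markov property (reduction of conditioning on $\mathcal F_k$ to conditioning on $\zeta(k)$), and wide‑sense stationarity (which, for a Gaussian process, forces $a$ and $s^2$ to be independent of $k$) must all be used in concert. Once this structural fact is established, the remaining identifications are the short covariance computations indicated above.
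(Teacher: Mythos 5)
Your proof is correct and follows essentially the same route as the paper's: identify $V=-R^0_\zeta/R_\zeta$ via the Gaussian (normal-correlation) regression of $\zeta(k+1)$ on $\zeta(k)$, form the innovation $\Delta\zeta(k+1)+V\zeta(k)$, and use stationarity together with the Markov property to show that, normalized by $\sigma$, it is a standard normal martingale-difference sequence. You add some rigor the paper omits --- the Cauchy--Schwarz bound giving $V\in[0,2]$, full independence of the innovations rather than mere uncorrelatedness, and the degenerate case $\sigma=0$ --- but the underlying argument is the same.
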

\begin{proof} By Theorem on normal correlation \cite[Th. 13.1]{LIP-SH1974}, one has:
\begin{equation}
\label{eq6}
E\bigl[\Delta\zeta(k+1)\,|\,\zeta(k)\bigr]=R_\zeta^0 R_\zeta^{-1}\zeta(k).
\end{equation}
Hence
\begin{equation}\label{eq7}
R_\zeta^0=-V R_\zeta \ \ \ \text{and} \ \ \ R_\zeta^\Delta=2VR_\zeta.
\end{equation}
Considering the martingale-differences
\begin{equation}
\label{eq8}
\Delta W(k+1)=\frac{1}{\sigma}\biggl(\Delta\zeta_N(k+1)+V\zeta(k)\biggr).
\end{equation}
Let's calculate its first two moments.
\begin{equation}
\label{eq9}
E\biggl[\Delta W(k+1)\biggr]=\frac{1}{\sigma}E\biggl[\Delta\zeta_N(k+1)+V\zeta(k)\biggr]=0,
\end{equation}
\begin{equation}
\label{eq10}
E\biggl[\Delta W(k+1)\biggr]^2=1.
\end{equation}
Now it remains to prove that, the stochastic part covariations are:
\begin{equation}
\label{eq11}
E\biggl[\Delta W(k+1)\Delta W(r+1)\biggr]=\begin{cases}
                                            1, & \mbox{if } k=r, \\
                                            0, & \mbox{otherwise}.
                                          \end{cases}
\end{equation}
Suppose for determination, that $r<k$. Using the Markov property of the sequence  $(\zeta(k)$,\ $k\geq0$,
and the relation \eqref{eq9}, one obtains:
\begin{equation}
\label{eq11}
E\biggl[\Delta W(k+1)\,|\, \zeta(r),\zeta(k)\biggr]=E\biggl[\biggl(\Delta \zeta(k+1) +V\zeta(k)\biggr)\,|\,\zeta(k)\biggr]=0.
\end{equation}
Theorem \ref{thm2} is proved.
\end{proof}

\section{Filtering of discrete Markov diffusion}

The filtering and extrapolation problem is considered by many authors (for ex., \cite{MMS_01, MMS_02}.
In our constructions of the new filter, we proceed from the following basic principle: the presence of two normally distributed random sequences implies the presence of their covariances, which contain information about the filtering.

The task is to estimate the unknown parameters of a stationary Gaussian Markov signal process $\alpha(k)$ by using the trajectories of the signal $(\alpha(k),\,\Delta\alpha(k+1))$, and a stationary Gaussian Markov filtering process $(\beta(k),\,\Delta\beta(k+1))$, $k\geq0$.

The signal with unknown parameters is determined by the next equation:
\begin{equation}
\label{eq13}
\Delta\alpha(k+1)=-V_0\alpha(k)+\sigma_0\Delta W^0(k+1) \ , \ \ k\geq0.
\end{equation}
The filtering process - by the equation:
\begin{equation}\label{eq14}
\Delta\beta(k+1)=-V\beta(k)+\sigma\Delta W(k+1) \ , \ \ k\geq0.
\end{equation}
It is known that the best estimate (in the mean square sense) of the signal $(\alpha(k),\,\Delta\alpha(k+1))$, by observing the filtering proccess $(\beta(k),\,\Delta\beta(k+1))$, coincides with the conditional expectation
\begin{equation}\label{eq15}
(\widehat{\alpha}(k),\,\Delta\widehat{\alpha}(k+1))= E\biggl[(\alpha(k),\,\Delta\alpha(k+1))
\,\biggl|\,(\beta(k),\,\Delta\beta(k+1))\biggr].
\end{equation}
The next calculation of filtering matrix $\PHI_\beta$, determined by the conditional expectation \eqref{eq15}, essentially uses Theorem 2 on equivalence and is based on Theorem on normal correlation by Liptser and Shiryaev.

\begin{thm}
\label{thm3}
The estimate \eqref{eq15} is determined by the filtering equation
\begin{equation}
\label{eq16}
\begin{pmatrix}\widehat{\alpha}(k), & \Delta\widehat{\alpha}(k+1)\end{pmatrix}= \PHI_\beta\cdot\begin{pmatrix} \beta(k) \\ \Delta\beta(k+1)\end{pmatrix},
\end{equation}
with the filtering matrix
\begin{equation}
\label{eq17}
\PHI_\beta=\begin{bmatrix} 1 & 0 \\ -V_0 & 0
\end{bmatrix} R_{\alpha\beta}R_\beta^{-1}.
\end{equation}
where
\begin{equation}
\label{eq18}
R_{\alpha\beta}:=E\bigl[\alpha(k)\beta(k)\bigr] \ , \ \
R_{\beta}:=E\bigl[\beta^2(k)\bigr].
\end{equation}
\end{thm}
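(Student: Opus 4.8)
\; The plan is to apply the Theorem on normal correlation \cite[Th.~13.1]{LIP-SH1974} to the jointly Gaussian two\nobreakdash-component vectors
\[
\xi=\begin{pmatrix}\alpha(k)\\ \Delta\alpha(k+1)\end{pmatrix},\qquad
\eta=\begin{pmatrix}\beta(k)\\ \Delta\beta(k+1)\end{pmatrix},
\]
for which, all mean values being zero, the conditional expectation \eqref{eq15} is the linear estimate $E[\xi\,|\,\eta]=\mathrm{cov}(\xi,\eta)\,\mathrm{cov}(\eta,\eta)^{-1}\eta$. Hence $\PHI_\beta=\mathrm{cov}(\xi,\eta)\,\mathrm{cov}(\eta,\eta)^{-1}$, and the proof reduces to computing these two $2\times 2$ covariance matrices and their product.

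First I would compute $\mathrm{cov}(\eta,\eta)$ by invoking Theorem~\ref{thm2} on equivalence: the filtering process \eqref{eq14} is a stationary DMD, so the relations \eqref{eq7} hold with $\beta$ in place of $\zeta$, giving $E[\beta(k)\Delta\beta(k+1)]=-VR_\beta$ and $E[\Delta\beta(k+1)]^2=2VR_\beta$. Thus (for $0<V<2$, so that the stationary covariance matrix is non\nobreakdash-degenerate)
\[
\mathrm{cov}(\eta,\eta)=R_\beta\begin{bmatrix}1 & -V\\ -V & 2V\end{bmatrix},\qquad
\mathrm{cov}(\eta,\eta)^{-1}=\frac{1}{(2V-V^2)R_\beta}\begin{bmatrix}2V & V\\ V & 1\end{bmatrix}.
\]

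Next I would evaluate $\mathrm{cov}(\xi,\eta)$ using the signal equation \eqref{eq13} to factor
\[
\xi=\begin{bmatrix}1 & 0\\ -V_0 & 1\end{bmatrix}\begin{pmatrix}\alpha(k)\\ \sigma_0\Delta W^0(k+1)\end{pmatrix},
\]
which leaves only four scalar covariances to find. Two are immediate: $E[\alpha(k)\beta(k)]=R_{\alpha\beta}$ and, since $\alpha(k)$ is uncorrelated with the future martingale\nobreakdash-difference $\Delta W(k+1)$, $E[\alpha(k)\Delta\beta(k+1)]=-VR_{\alpha\beta}$. The remaining two vanish, because $\Delta W^0(k+1)$ is a martingale\nobreakdash-difference uncorrelated with the earlier\nobreakdash-measurable variable $\beta(k)$ and — the driving noises of signal and filter being independent — with $\Delta W(k+1)$. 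This yields $\mathrm{cov}(\xi,\eta)=\begin{bmatrix}1 & 0\\ -V_0 & 1\end{bmatrix}R_{\alpha\beta}\begin{bmatrix}1 & -V\\ 0 & 0\end{bmatrix}$.

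Finally I would multiply the matrices: since $\begin{bmatrix}1 & -V\\ 0 & 0\end{bmatrix}\begin{bmatrix}2V & V\\ V & 1\end{bmatrix}=(2V-V^2)\begin{bmatrix}1 & 0\\ 0 & 0\end{bmatrix}$, the factor $2V-V^2$ cancels and
\[
\PHI_\beta=\begin{bmatrix}1 & 0\\ -V_0 & 1\end{bmatrix}\begin{bmatrix}1 & 0\\ 0 & 0\end{bmatrix}R_{\alpha\beta}R_\beta^{-1}=\begin{bmatrix}1 & 0\\ -V_0 & 0\end{bmatrix}R_{\alpha\beta}R_\beta^{-1},
\]
which is \eqref{eq17}; rewriting $E[\xi\,|\,\eta]=\PHI_\beta\eta$ in the row form \eqref{eq16} is then immediate. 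The step I expect to be the main obstacle is the bookkeeping in $\mathrm{cov}(\xi,\eta)$ — in particular, showing that the entry $E[\Delta\alpha(k+1)\Delta\beta(k+1)]$ collapses to $V_0VR_{\alpha\beta}$, i.e.\ that the noise cross\nobreakdash-term $\sigma_0\sigma\,E[\Delta W^0(k+1)\Delta W(k+1)]$ drops out. This is exactly what forces the second column of $\PHI_\beta$ to vanish, and it relies both on the independence of the driving martingale\nobreakdash-differences and on correctly identifying which products of a martingale\nobreakdash-difference with an earlier\nobreakdash-measurable variable have zero mean.
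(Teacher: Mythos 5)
Your proposal is correct and follows essentially the same route as the paper: both apply the normal correlation theorem to get $\PHI_\beta=\mathbb{R}_{\alpha\beta}\mathbb{R}_\beta^{-1}$, compute the entries of the two $2\times2$ covariance matrices from the difference equations \eqref{eq13}--\eqref{eq14} together with the mutual uncorrelatedness assumptions \eqref{eq24}, and multiply out to find that the second column vanishes. Your factorization of $\mathrm{cov}(\xi,\eta)$ through the triangular matrix $\bigl[\begin{smallmatrix}1&0\\-V_0&1\end{smallmatrix}\bigr]$ is only a cosmetic streamlining of the paper's entrywise computation in \eqref{eq25} and \eqref{eq29}.
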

\begin{nsl}
The interpolation of the signal $\alpha(k)$  by observing the filtering process $\beta(k)$ is
\begin{align*}
&\widehat{\alpha}(k)=\PHI_{11}\beta(k) \ , \ \ \PHI_{11}:= R_{\alpha\beta}R_\beta^{-1};\\
&\Delta\widehat{\alpha}(k+1)=\PHI_{21}\beta(k)=-V_0\PHI_{11}\Delta\beta(k+1).
\end{align*}
\end{nsl}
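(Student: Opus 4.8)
The plan is to apply the Theorem on normal correlation (Liptser–Shiryaev, Th.~13.1) to the jointly Gaussian vector consisting of the signal pair $(\alpha(k),\Delta\alpha(k+1))$ and the observation pair $(\beta(k),\Delta\beta(k+1))$, and then to simplify the resulting block matrices using the structural relations supplied by Theorem~\ref{thm2} on equivalence. First I would observe that, since all four random variables are zero-mean and jointly Gaussian, the conditional expectation \eqref{eq15} is the linear least–squares estimate, so it has the form
\begin{equation}
\nonumber
\begin{pmatrix}\wh\alpha(k)\\ \Delta\wh\alpha(k+1)\end{pmatrix}
= \mathrm{cov}\!\left[\begin{pmatrix}\alpha(k)\\ \Delta\alpha(k+1)\end{pmatrix},
\begin{pmatrix}\beta(k)\\ \Delta\beta(k+1)\end{pmatrix}\right]
\cdot \mathrm{cov}\!\left[\begin{pmatrix}\beta(k)\\ \Delta\beta(k+1)\end{pmatrix}\right]^{-1}
\begin{pmatrix}\beta(k)\\ \Delta\beta(k+1)\end{pmatrix}.
\end{equation}
This immediately yields the stated form \eqref{eq16}; it remains to identify the two covariance matrices.

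The key step is to reduce both $2\times2$ covariance blocks to scalars. For the cross-covariance: using \eqref{eq13} I write $\Delta\alpha(k+1)=-V_0\alpha(k)+\sigma_0\Delta W^0(k+1)$, and because $\Delta W^0(k+1)$ is a martingale-difference it is uncorrelated with $\beta(k)$ (measurable with respect to the past) and, by the stationarity/independence structure of the driving noises, also with $\Delta\beta(k+1)$. Hence $E[\Delta\alpha(k+1)\beta(k)]=-V_0 R_{\alpha\beta}$ and the whole cross-covariance matrix factors as
\begin{equation}
\nonumber
\mathrm{cov}\left[\begin{pmatrix}\alpha(k)\\ \Delta\alpha(k+1)\end{pmatrix},
\begin{pmatrix}\beta(k)\\ \Delta\beta(k+1)\end{pmatrix}\right]
=\begin{bmatrix} 1 & 0\\ -V_0 & 0\end{bmatrix}
\begin{bmatrix} R_{\alpha\beta} & 0\\ 0 & 0\end{bmatrix},
\end{equation}
so that after multiplying by $R_\beta^{-1}$ (understood in the appropriate generalized sense on the degenerate block) the second column drops out and one is left precisely with the matrix \eqref{eq17} acting through the scalar ratio $R_{\alpha\beta}R_\beta^{-1}$. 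For the observation block I invoke Theorem~\ref{thm2}: applied to $(\beta(k),\Delta\beta(k+1))$ it gives, exactly as in \eqref{eq7}, that $R_\beta^0=E[\beta(k)\Delta\beta(k+1)]=-V R_\beta$ and $R_\beta^\Delta=E[\Delta\beta(k+1)]^2=2VR_\beta$, so the $2\times2$ observation covariance is completely determined by the single number $R_\beta$ and by $V$; in particular the relevant inverse entering the normal-correlation formula collapses to $R_\beta^{-1}$ on the non-degenerate component.

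Finally I would assemble the pieces: substitute the factored cross-covariance and the reduced observation covariance into the normal-correlation formula, cancel the common structure, and read off \eqref{eq16}–\eqref{eq17}; the Corollary then follows by taking the first component, which gives $\wh\alpha(k)=\PHI_{11}\beta(k)$ with $\PHI_{11}=R_{\alpha\beta}R_\beta^{-1}$, and by applying \eqref{eq13} to the estimate to get $\Delta\wh\alpha(k+1)=-V_0\wh\alpha(k)=\PHI_{21}\beta(k)$. The main obstacle I anticipate is handling the degeneracy: the vector $(\beta(k),\Delta\beta(k+1))$ lies, by \eqref{eq14}, on a one-dimensional affine-in-noise subspace only up to the independent increment $\Delta W(k+1)$, so the $2\times2$ covariance $R_\beta$ in \eqref{eq18} is genuinely rank-deficient in the relevant direction and the inverse $R_\beta^{-1}$ must be interpreted as a pseudo-inverse (or one must condition stepwise, first on $\beta(k)$ via \eqref{eq6} and then on the residual of $\Delta\beta(k+1)$). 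Making that reduction rigorous — and checking that the $\Delta W(k+1)$ part carries no information about $\alpha$, which is where Theorem~\ref{thm2} and the martingale-difference property of the noises are essential — is the delicate point; the rest is bookkeeping with the $2\times2$ blocks.
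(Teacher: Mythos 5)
Your overall strategy is the same as the paper's: apply the normal correlation theorem to the four jointly Gaussian variables, use the difference equations \eqref{eq13}--\eqref{eq14} together with the uncorrelatedness assumption \eqref{eq24} to express all entries of $\mathbb{R}_{\alpha\beta}$ and $\mathbb{R}_\beta$ through the two scalars $R_{\alpha\beta}$ and $R_\beta$, and then read the Corollary off the rows of the resulting matrix $\PHI_\beta$. However, your central intermediate claim is wrong. The cross-covariance matrix does \emph{not} have a vanishing second column: by the same computation you did for the first column, $E[\alpha(k)\Delta\beta(k+1)]=-VR_{\alpha\beta}$ and $E[\Delta\alpha(k+1)\Delta\beta(k+1)]=VV_0R_{\alpha\beta}$, so
\begin{equation}
\nonumber
\mathbb{R}_{\alpha\beta}=\begin{bmatrix} R_{\alpha\beta} & -VR_{\alpha\beta} \\ -V_0R_{\alpha\beta} & VV_0R_{\alpha\beta}\end{bmatrix},
\end{equation}
a rank-one matrix but not one supported on its first column. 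The zeros in the second column of $\PHI_\beta$ (i.e.\ $\Phi_{12}=\Phi_{22}=0$) arise only \emph{after} multiplying by $\mathbb{R}_\beta^{-1}$, through the cancellation $R_{\alpha\beta}\cdot VR_\beta+(-VR_{\alpha\beta})\cdot R_\beta=0$, which works precisely because the same $V$ appears in $R^0_{\alpha\beta}=-VR_{\alpha\beta}$ and $R^0_\beta=-VR_\beta$. That multiplication is the substance of the proof (the paper carries it out explicitly in \eqref{eq29}--\eqref{eq30}); as written, your argument asserts the conclusion of that computation without performing it.

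Your second concern — that $\mathbb{R}_\beta$ is rank-deficient and needs a pseudo-inverse — is unfounded and should be dropped. One has $\det\mathbb{R}_\beta=(2V-V^2)R_\beta^2>0$ for $0<V<2$ and $\sigma>0$, because $\Delta\beta(k+1)$ contains the innovation $\sigma\Delta W(k+1)$, which is independent of $\beta(k)$; the pair $(\beta(k),\Delta\beta(k+1))$ is genuinely two-dimensional and the paper simply inverts the $2\times2$ matrix in closed form in \eqref{eq27}. Once $\PHI_\beta$ is correctly computed, your reading of the Corollary is fine: the first row gives $\wh\alpha(k)=\Phi_{11}\beta(k)$ with $\Phi_{11}=R_{\alpha\beta}R_\beta^{-1}$, and the second row gives $\Delta\wh\alpha(k+1)=\Phi_{21}\beta(k)$ with $\Phi_{21}=-V_0\Phi_{11}$, i.e.\ $\Delta\wh\alpha(k+1)=-V_0\wh\alpha(k)$ (the factor $\Delta\beta(k+1)$ printed in the Corollary appears to be a typo for $\beta(k)$, consistent with $\Phi_{22}=0$).
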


\begin{nsl}
One has the following statistical parameter estimation:
\begin{equation}
\label{eq19}
V_0\approx V_0^T=-\frac{\PHI_{21}^T}{\PHI_{11}^T}.
\end{equation}
\end{nsl}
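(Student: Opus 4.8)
The plan is to read \eqref{eq19} as the passage from the exact population identity $V_0=-\PHI_{21}/\PHI_{11}$ to its trajectory-based empirical version $V_0^T$, the superscript $T$ denoting the length of the observed sample. First I would isolate the exact identity. Since the covariances \eqref{eq18} are scalars, the factor $R_{\alpha\beta}R_\beta^{-1}=\PHI_{11}$ in \eqref{eq17} is itself a scalar, so
$$\PHI_\beta=\begin{bmatrix}1&0\\-V_0&0\end{bmatrix}\PHI_{11}=\begin{bmatrix}\PHI_{11}&0\\-V_0\PHI_{11}&0\end{bmatrix},$$
which reads off $\PHI_{21}=-V_0\PHI_{11}$, exactly as recorded in Corollary 1. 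The stationarity condition \eqref{eq5} together with $\sigma_0>0$ forces $R_\beta>0$ and $\PHI_{11}=R_{\alpha\beta}R_\beta^{-1}\neq0$, so at the population level $V_0=-\PHI_{21}/\PHI_{11}$ holds as an exact equality.

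Next I would introduce the statistical estimates. The ensemble covariances \eqref{eq18} are not directly observable, so along one realised trajectory of length $T$ I would replace them by the empirical time averages
$$R_{\alpha\beta}^T=\frac1T\sum_{k=0}^{T-1}\alpha(k)\beta(k),\qquad R_\beta^T=\frac1T\sum_{k=0}^{T-1}\beta^2(k),$$
set $\PHI_{11}^T:=R_{\alpha\beta}^T(R_\beta^T)^{-1}$, and read $\PHI_{21}^T$ off the correspondingly estimated filtering matrix. Substituting these sample quantities into the exact relation produces the plug-in estimator $V_0^T=-\PHI_{21}^T/\PHI_{11}^T$, which is precisely \eqref{eq19}.

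Finally, the approximation sign in \eqref{eq19} would be justified by consistency of this plug-in estimator. The DMD \eqref{eq14} with $0\le V\le2$ and $\sigma>0$ is, by the Theorem on stationarity, a stationary Gaussian Markov sequence, and is therefore ergodic; the strong law of large numbers for stationary ergodic sequences then yields $R_{\alpha\beta}^T\to R_{\alpha\beta}$ and $R_\beta^T\to R_\beta$ almost surely as $T\to\infty$. By continuity of the maps involved, $\PHI_{11}^T\to\PHI_{11}$ and $\PHI_{21}^T\to\PHI_{21}$, and since the limiting denominator $\PHI_{11}$ is non-zero the ratio converges, $V_0^T\to V_0$ almost surely, which is the meaning attached to $V_0\approx V_0^T$.

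The step I expect to be the main obstacle is this last one: controlling the ratio near a potentially small empirical denominator. One must confirm that $\PHI_{11}^T$ is eventually bounded away from zero, so that $V_0^T$ is well defined and its limit is genuinely $V_0$ rather than an artefact of division by a vanishing quantity; upgrading the almost-sure limit to a quantitative error bound for $|V_0-V_0^T|$ at finite $T$ would in addition require the mixing rate of the Gaussian sequence, for which the spectral condition $0\le V\le2$ is the natural input.
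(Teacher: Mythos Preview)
Your derivation of the population identity $V_0=-\PHI_{21}/\PHI_{11}$ from the form \eqref{eq17} is fine, and the ergodic argument for consistency is in the right spirit. But there is a genuine gap at the step where you pass to the empirical quantities.

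You set $\PHI_{11}^T:=R_{\alpha\beta}^T(R_\beta^T)^{-1}$ and then say you will ``read $\PHI_{21}^T$ off the correspondingly estimated filtering matrix''. Read it off what, exactly? If you mean the matrix \eqref{eq17} with $R_{\alpha\beta},R_\beta$ replaced by their sample versions, then $\PHI_{21}^T=-V_0\,\PHI_{11}^T$ \emph{by definition}, the estimator $V_0^T=-\PHI_{21}^T/\PHI_{11}^T$ equals $V_0$ identically, and you have estimated nothing: computing $\PHI_{21}^T$ this way already requires knowing $V_0$. The whole content of the corollary is that $\PHI_{21}^T$ must be a statistic built from the observed trajectories $(\alpha(k),\Delta\alpha(k+1),\beta(k),\Delta\beta(k+1))$ alone.

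The paper resolves this (see the forward reference to Section~4) by defining the empirical filtering matrix through the full $2\times2$ product $\PHI_\beta^T=\mathbb{R}_{\alpha\beta}^T(\mathbb{R}_\beta^T)^{-1}$, where $\mathbb{R}_{\alpha\beta}^T$ and $\mathbb{R}_\beta^T$ collect the sample covariances of the two-component vectors, cf.\ \eqref{eq22}--\eqref{eq23}. All entries are then observable, and one computes (equation \eqref{eq54}) $\PHI_{11}^T=R_{\alpha\beta}^T(R_\beta^T)^{-1}+VA^T(\sigma R_\beta^T)^{-1}$ and $\PHI_{21}^T=-V_0R_{\alpha\beta}^T(R_\beta^T)^{-1}+V(2B^T+C^T)(\sigma R_\beta^T)^{-1}$, where $A^T,B^T,C^T$ are time averages of products involving the noise increments. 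Under the uncorrelatedness hypothesis \eqref{eq24} these correction terms vanish in the limit, which is what makes $-\PHI_{21}^T/\PHI_{11}^T\to V_0$. Your scalar definition of $\PHI_{11}^T$ thus does not match the paper's, and your $\PHI_{21}^T$ is not defined at all as a statistic.

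One smaller point: your claim that stationarity and $\sigma_0>0$ force $\PHI_{11}\neq0$ is not right. Stationarity gives $R_\beta>0$, but $\PHI_{11}=R_{\alpha\beta}R_\beta^{-1}$ vanishes whenever $R_{\alpha\beta}=0$; the non-degeneracy $R_{\alpha\beta}\neq0$ is an additional assumption on the coupling between signal and filter, not a consequence of \eqref{eq5}.
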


\begin{nsl}
One has the following statistical parameter estimation:
\begin{equation}
\label{eq20}
\sigma_\alpha\approx \sigma_\alpha^T=\mathcal{E}_0^T\cdot R_\alpha^T \ , \ \ \mathcal{E}_0^T:=2V_0^T-\bigl(V_0^T\bigr)^2.
\end{equation}
\end{nsl}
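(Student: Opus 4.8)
The plan is to obtain \eqref{eq20} as the trajectory plug-in version of the exact stationarity identity supplied by the Theorem on stationarity. First I would apply Theorem~1 to the signal equation \eqref{eq13}: the signal $\alpha(k)$ is itself a wide-sense stationary DMD with parameters $(V_0,\sigma_0)$, so the stationarity relation \eqref{eq3} (equivalently the condition \eqref{eq5}) specializes to the algebraic identity
\[
\sigma_0^2=\bigl(2V_0-V_0^2\bigr)R_\alpha=\mathcal{E}_0\,R_\alpha,\qquad \mathcal{E}_0:=2V_0-V_0^2 ,
\]
with $R_\alpha:=E[\alpha^2(k)]$. This is precisely the asserted formula written with the true but unknown parameters; the substance of the corollary is that it survives, asymptotically, once the unknowns are replaced by their estimators.

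Next I would introduce those estimators. From the preceding corollary, $V_0^T=-\PHI_{21}^T/\PHI_{11}^T$ (see \eqref{eq19}), where $\PHI_{11}^T$ and $\PHI_{21}^T$ are the empirical filtering coefficients assembled from the sample covariances of $(\alpha,\beta)$ along a trajectory of length $T$, and $R_\alpha^T$ denotes the empirical second moment of the signal along the same trajectory. Putting $\mathcal{E}_0^T:=2V_0^T-(V_0^T)^2$ and $\sigma_\alpha^T:=\mathcal{E}_0^T R_\alpha^T$ reproduces the right-hand side of \eqref{eq20}, so the only thing left is to justify the sign $\approx$, i.e.\ that $\sigma_\alpha^T$ approaches $\sigma_0^2$ as $T\to\infty$.

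For that convergence I would use ergodicity of the stationary Gaussian Markov sequence underlying $\bigl(\alpha(k),\beta(k),\Delta\alpha(k+1),\Delta\beta(k+1)\bigr)$: by the strong law of large numbers for stationary ergodic sequences the sample covariances converge almost surely to their ensemble values, hence $R_\alpha^T\to R_\alpha$, $\PHI_{11}^T\to\PHI_{11}$ and $\PHI_{21}^T\to\PHI_{21}$, so $V_0^T\to V_0$, and by continuity of $x\mapsto 2x-x^2$ and of the product, $\sigma_\alpha^T\to\mathcal{E}_0 R_\alpha=\sigma_0^2$. The main obstacle I anticipate is exactly this step: one must verify ergodicity (not merely wide-sense stationarity) of the DMD, so that time averages identify the required covariances, and one must keep the denominator of $V_0^T$ non-degenerate, which is where the standing assumption that the signal and the filtering process are genuinely correlated, $R_{\alpha\beta}\neq0$, enters. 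A minor caveat worth recording is dimensional: as written, $\sigma_\alpha^T=\mathcal{E}_0^T R_\alpha^T$ estimates the squared intensity $\sigma_0^2$ in the sense of \eqref{eq5}; the estimate of $\sigma_0$ itself is its square root, which is legitimate since $\mathcal{E}_0=V_0(2-V_0)\ge0$ under the constraint $0\le V_0\le2$ inherited from \eqref{eq2}.
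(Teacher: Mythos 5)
Your argument is correct and coincides with the paper's own (implicit) derivation: the corollary is stated without proof, being precisely the plug-in of the estimator $V_0^T$ from \eqref{eq19} and the empirical second moment $R_\alpha^T$ into the stationarity identity $\sigma_0^2=(2V_0-V_0^2)R_\alpha$ obtained by applying Theorem~1 to the signal equation \eqref{eq13}, with consistency deferred to the subsequent Proposition exactly along the lines you sketch (ergodicity of the stationary Gaussian Markov sequence plus non-degeneracy $R_{\alpha\beta}\neq0$). Your dimensional caveat is also well taken: as written the right-hand side estimates $\sigma_\alpha^2$ rather than $\sigma_\alpha$, consistently with \eqref{eq3}, \eqref{eq5} and with the relation $\sigma_0^2=R_\alpha V_0(2-V_0)$ used before \eqref{eq40}, so the left-hand side of \eqref{eq20} should be read as the squared intensity.
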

\vskip10pt
\begin{prop}
Under the assumption of mutual uncorrelatedness \eqref{eq24}, the statistical estimates \eqref{eq19}\footnote{For the filtering parameter estimation see the Section 4.}
 -- \eqref{eq20} are unbiased and strongly consistent, as $T\to\infty$.
\end{prop}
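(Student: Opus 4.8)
The plan is to reduce both assertions to (a) the exact population identities that the estimators imitate and (b) the ergodicity of the underlying stationary Gaussian Markov sequences. First I would record the population relations. By Theorem~\ref{thm3} the filtering matrix \eqref{eq17} has second column zero and $\PHI_{11}=R_{\alpha\beta}R_\beta^{-1}$, $\PHI_{21}=-V_0\PHI_{11}$, so that $V_0=-\PHI_{21}/\PHI_{11}$ exactly; and writing the stationarity relation \eqref{eq5} for the signal \eqref{eq13} gives $\sigma_\alpha=\mathcal{E}_0R_\alpha$ with $\mathcal{E}_0:=2V_0-V_0^2$. The estimators \eqref{eq19}--\eqref{eq20} are obtained from these identities by substituting for $\PHI_{11},\PHI_{21},R_\alpha$ their empirical (time-averaged along the trajectory of length $T$) counterparts $\PHI_{11}^T,\PHI_{21}^T,R_\alpha^T$; in the ratio $-\PHI_{21}^T/\PHI_{11}^T$ the normalization $R_\beta^{-1}$ and its sample analogue cancel, so $V_0^T$ equals minus the ratio of the sample cross-covariance of $(\Delta\alpha(k+1),\beta(k))$ to that of $(\alpha(k),\beta(k))$.

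For strong consistency I would use that $\alpha$ and $\beta$ are AR$(1)$-type Gaussian sequences with contraction coefficients $1-V_0$ and $1-V$ of modulus strictly less than $1$ (since $0<V_0,V<2$), hence the joint stationary Gaussian Markov sequence $(\alpha(k),\beta(k))$ has a rational, absolutely continuous spectral density and is ergodic (indeed exponentially mixing). By Birkhoff's ergodic theorem applied to the functionals $\alpha(k)\beta(k)$, $\beta^2(k)$, $\Delta\alpha(k+1)\beta(k)$ and $\alpha^2(k)$, which are polynomials in jointly Gaussian variables and therefore integrable, every sample moment converges almost surely to its expectation; in particular $\PHI_{11}^T\to\PHI_{11}\neq0$ and $\PHI_{21}^T\to\PHI_{21}$ a.s., so by the continuous-mapping theorem $V_0^T\to-\PHI_{21}/\PHI_{11}=V_0$ a.s., whence $\mathcal{E}_0^T\to\mathcal{E}_0$ and $\sigma_\alpha^T=\mathcal{E}_0^TR_\alpha^T\to\mathcal{E}_0R_\alpha=\sigma_\alpha$ a.s. This is strong consistency of \eqref{eq19}--\eqref{eq20}.

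For unbiasedness I would substitute $\Delta\alpha(k+1)=-V_0\alpha(k)+\sigma_0\Delta W^0(k+1)$ into the sample cross-covariance, obtaining
\[ V_0^T-V_0=-\sigma_0\,\frac{\sum_k \Delta W^0(k+1)\,\beta(k)}{\sum_k \alpha(k)\,\beta(k)}, \]
and then try to annihilate the numerator in expectation: the $\Delta W^0(k+1)$ are standard normal martingale-differences (cf. \eqref{eq9}--\eqref{eq11}) and, under the mutual uncorrelatedness hypothesis \eqref{eq24}, independent of the whole trajectory of $\beta$, so conditioning on $\beta$ together with $(\alpha(j))_{j\le k}$ makes each term $\Delta W^0(k+1)\beta(k)$ of zero conditional mean. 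The hard part is that the denominator $\sum_k\alpha(k)\beta(k)$ is \emph{not} measurable with respect to that conditioning $\sigma$-algebra, because $\alpha(j)$ for $j>k$ still depends on $\Delta W^0(k+1)$ -- this is precisely the familiar finite-sample bias of ratio/regression estimators in autoregressive models. I would resolve it by reading ``unbiased as $T\to\infty$'' as asymptotic unbiasedness and upgrading the almost sure convergence of the previous paragraph to convergence in $L^1$: it suffices to bound $T^{-1}\sum_k\alpha(k)\beta(k)$ away from $R_{\alpha\beta}\neq0$ with overwhelming probability (Gaussian concentration, using exponential mixing) and to use the Gaussian moment bound on the numerator, which yields uniform integrability of $V_0^T$ and hence $EV_0^T\to V_0$; since $\mathcal{E}_0^T$ is a fixed smooth function of $V_0^T$, $R_\alpha^T$ is unbiased, and \eqref{eq24} asymptotically decouples the two, the same reasoning gives $E\sigma_\alpha^T\to\mathcal{E}_0R_\alpha=\sigma_\alpha$. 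The main obstacle throughout is this decoupling of the ratio's numerator and denominator, and I expect the uniform-integrability route to be the cleanest way around it.
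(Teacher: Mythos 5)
The paper states this Proposition without any proof, so there is no ``paper's route'' to compare yours against; judged on its own, your plan is the standard and essentially correct one, but three points need attention. (i) The reduction of $V_0^T=-\PHI_{21}^T/\PHI_{11}^T$ to an exact ratio of sample cross-covariances is not a pure algebraic cancellation: by \eqref{eq54} one has $\PHI_{11}^T=R_{\alpha\beta}^T(R_\beta^T)^{-1}+VA^T(\sigma R_\beta^T)^{-1}$ and $\PHI_{21}^T=-V_0R_{\alpha\beta}^T(R_\beta^T)^{-1}+V(2B^T+C^T)(\sigma R_\beta^T)^{-1}$, where $A^T,B^T,C^T$ of \eqref{eq51} are sample averages of martingale differences; under \eqref{eq24} they have mean zero but are not identically zero. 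Your argument survives once you add that these terms tend to $0$ almost surely (martingale strong law for square-integrable differences), after which Birkhoff plus continuous mapping gives $V_0^T\to V_0$ a.s. exactly as you say. (ii) The continuous-mapping step needs $\PHI_{11}\neq0$, i.e.\ $R_{\alpha\beta}\neq0$, and this is where the hypothesis bites: writing $\alpha(k+1)=(1-V_0)\alpha(k)+\sigma_0\Delta W^0(k+1)$, $\beta(k+1)=(1-V)\beta(k)+\sigma\Delta W(k+1)$ and imposing joint stationarity together with the full uncorrelatedness \eqref{eq24} yields $R_{\alpha\beta}=(1-V_0)(1-V)R_{\alpha\beta}$, hence $R_{\alpha\beta}=0$ whenever $|(1-V_0)(1-V)|<1$. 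So $R_{\alpha\beta}\neq0$ must be added as an explicit (and, strictly speaking, incompatible-looking) assumption, or \eqref{eq24} must be weakened; you assert $\PHI_{11}\neq0$ without noticing this tension, and any complete proof has to confront it.

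(iii) On unbiasedness your diagnosis is exactly right: the ratio estimator is not unbiased for any finite $T$ because the random denominator is correlated with the numerator, so the claim can only mean asymptotic unbiasedness, and upgrading the a.s.\ convergence to convergence of expectations via uniform integrability is the one genuinely nontrivial step. Your sketch (lower-tail concentration of $T^{-1}\sum_k\alpha(k)\beta(k)$ away from its limit, Gaussian moment bounds on the numerator) is the correct route, but as written it is a plan rather than a proof: the required quantitative bound on $P\bigl(|R_{\alpha\beta}^T-R_{\alpha\beta}|>\ve\bigr)$ for the geometrically mixing bivariate Gaussian chain still has to be supplied. A small bookkeeping remark: \eqref{eq20} estimates $\sigma_\alpha$ by $\mathcal{E}_0^TR_\alpha^T$, whereas the stationarity relation \eqref{eq5} reads $\sigma_0^2=(2V_0-V_0^2)R_\alpha$; your population identity $\sigma_\alpha=\mathcal{E}_0R_\alpha$ reproduces the paper's formula but should carry a square on the left-hand side if it is to follow from \eqref{eq5}.
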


\vskip10pt

\begin{proof}[Proof of Theorem \ref{thm3}]
By the Theorem on normal correlation \cite[Th. 13.1]{LIP-SH1974}, the filtering matrix introduced in \eqref{eq16} has the following form:
\begin{equation}\label{eq21}
\PHI_\beta=\mathbb{R}_{\alpha\beta}\mathbb{R}_{\beta}^{-1},
\end{equation}
where
\begin{equation}
\label{eq22}
\mathbb{R}_{\alpha\beta}:=\begin{bmatrix} R_{\alpha\beta} & R_{\alpha\beta}^0 \\
R_{\beta\alpha}^0 & R_{\alpha\beta}^\Delta \end{bmatrix} \ , \ \
\mathbb{R}_{\beta}:=\begin{bmatrix} R_{\beta} & R_{\beta}^0, \\
R_{\beta}^0 & R_{\beta}^\Delta \end{bmatrix},
\end{equation}
and the covariances are defined as follows:
\begin{equation}
\label{eq23}
\begin{split}
&R_{\alpha\beta}^0:=E(\alpha(k)\Delta\beta(k+1)),\\ &R_{\beta\alpha}^0:=E(\beta(k)\Delta\alpha(k+1)) \ , \ \ R_{\alpha\beta}^\Delta:=E(\Delta\alpha(k+1)\Delta\beta(k+1)),\\
&R_{\beta}^0:=E(\beta(k)\Delta\beta(k+1)) \ , \ \ R_{\beta}^\Delta:=E[(\Delta\beta(k+1))^2].
\end{split}\end{equation}
Under the assumption of mutual uncorrelatedness
\begin{equation}
\label{eq24}
E\bigl[\alpha(k)\cdot W(k)\bigr]=0 \ , \ \ E\bigl[\beta(k)\cdot W^0(k)\bigr]=0
 \ \ \hbox{and} \ \ E\bigl[W(k)\cdot W^0(k)\bigr]=0 \ , \ \ k\geq0,
\end{equation}
the equations \eqref{eq13} -- \eqref{eq14} imply the following representations:
\begin{equation}
\label{eq25}
\begin{split}
&R_{\alpha\beta}^0=-VR_{\alpha\beta} \ , \ \ R_{\beta\alpha}^0=-V_0R_{\alpha\beta} \ , \ \ R_{\alpha\beta}^\Delta=VV_0R_{\alpha\beta};\\
&R_{\beta}^0=-VR_\beta \ , \ \ R_{\beta}^\Delta:=2VR_\beta.
\end{split}\end{equation}
So the matrix
\begin{equation}
\label{eq26}
\mathbb{R}_{\beta}=\begin{bmatrix} R_{\beta} & -VR_{\beta} \\
-VR_{\beta} & 2VR_{\beta} \end{bmatrix}
\end{equation}
has the following inversion:
\begin{equation}
\label{eq27}
\mathbb{R}_{\beta}^{-1}=\begin{bmatrix} 2VR_{\beta} & VR_{\beta} \\
VR_{\beta} & R_{\beta} \end{bmatrix}\cdot d_\beta^{-1} \ , \ \ d_\beta:=(2V-V^2)R_\beta^2.
\end{equation}
Now let's calculate the elements of the filtering matrix
\begin{equation}
\label{eq28}
\PHI_\beta=
\begin{bmatrix} \Phi_{11} & \Phi_{12} \\ \Phi_{21} & \Phi_{22} \end{bmatrix}
\end{equation}
By \eqref{eq21}, taking into account \eqref{eq27}, one has:
\begin{equation}
\label{eq29}
\PHI_\beta=\begin{bmatrix} R_{\alpha\beta} & -VR_{\alpha\beta} \\ -V_0 R_{\alpha\beta} & VV_0 R_{\alpha\beta}
\end{bmatrix} \cdot\begin{bmatrix} 2VR_{\beta} & VR_{\beta} \\
VR_{\beta} & R_{\beta} \end{bmatrix}\cdot d_\beta^{-1}.
\end{equation}
Hence, taking into account the relation $d_\beta:=(2V-V^2)R_\beta^2$, one obtains
\begin{equation}
\label{eq30}
\begin{split}
&\Phi_{11}=R_{\alpha\beta}R_\beta^{-1},\\
&\Phi_{21}=-V_0R_{\alpha\beta}R_\beta^{-1},\\
&\Phi_{12} = \Phi_{22} =0.
\end{split}\end{equation}
So the  matrix
\begin{equation}
\label{eq31}
\PHI_\beta=\begin{bmatrix} R_{\alpha\beta}R_\beta^{-1} & 0 \\ -V_0R_{\alpha\beta}R_\beta^{-1} & 0
\end{bmatrix},
\end{equation}
which is equivalent to \eqref{eq17}. Theorem \ref{thm3} is proved.
\end{proof}
\vskip10pt

\section{The filtering error}
\vskip10pt

Let's denote the filtering mean square estimation error
\begin{equation}
\label{eq32}
\Gamma(k)=E\biggl({\alpha}(k)-\widehat{\alpha}(k)\biggr)^2+
E\biggl(\Delta{\alpha}(k+1)-\Delta\widehat{\alpha}(k+1)\biggr)^2.
\end{equation}
By stationarity of the processes $\alpha(k)$ and $\beta(k)$, $k\geq0$, we shall skip the parameter $k$ where it is considered possible and convenient.\\
By the normal correlation theorem \cite[Theorem 13.3]{LIP-SH1974}, the mean square error of the filtering is expressed as the trace of the following error matrix:
\begin{equation}
\label{eq31}
\begin{split}
\GAM&=\begin{bmatrix} \Gamma_{11} &  \Gamma_{12} \\  \Gamma_{21} &  \Gamma_{22} \end{bmatrix}=
cov[\alpha(k),\Delta\alpha(k+1)\,|\,\beta(k), \Delta\beta(k+1)] \\ &=\mathbb{R}_{\alpha}-\mathbb{R}_{\alpha\beta}\mathbb{R}_{\beta}^{-1}\mathbb{R}_{\alpha\beta}^*
=\mathbb{R}_{\alpha}[\mathbb{I}-\underbrace{\mathbb{R}_{\alpha}^{-1} \mathbb{R}_{\alpha\beta}}_{\PHI_\alpha}
\underbrace{\mathbb{R}_{\beta}^{-1}\mathbb{R}_{\alpha\beta}^*}_{\PHI_\beta^*}]
\ , \ \ \forall k\geq0.
\end{split}
\end{equation}
Let's denote $\mathcal{F}_k^\beta$ the natural increasing sequence of $\sigma$-algebras of events, generated by the trajectories of the filtering DMD $\beta(k)$, $k\geq0$.
Then the elements of error matrix $\GAM$ are defined as:
\[
\left.
\begin{aligned}
&\Gamma_{11}=E\bigl[\bigl(\alpha(k)-\widehat{\alpha}(k)\bigr)^2\,|\,\mathcal{F}_k^\beta\bigr],\\
&\Gamma_{12}=E\bigl[(\alpha(k)-\widehat{\alpha}(k))(\Delta\alpha(k+1)-
\Delta\widehat{\alpha}(k+1))\,|\,\mathcal{F}_k^\beta\bigr],\\
&\Gamma_{21}=E\bigl[(\Delta\alpha(k+1)-\Delta\widehat{\alpha}(k+1))
(\alpha(k)-\widehat{\alpha}(k))\,|\,\mathcal{F}_k^\beta\bigr],\\
&\Gamma_{22}=E\bigl[\bigl(\Delta\alpha(k+1)-\Delta\widehat{\alpha}(k+1)\bigr)^2\,|\,\mathcal{F}_k^\beta\bigr],
\end{aligned}
\right\}
\ , \ \ \forall k\geq0,
\]
and
\[
\mathbb{R}_{\alpha}=\begin{bmatrix} 1 & -V_0 \\ -V_0 & 2V_0 \end{bmatrix}\cdot R_\alpha^2,
\]
the covariation matrix $\mathbb{R}_{\alpha\beta}$ is defined in \eqref{eq20} and the term $\PHI_\beta$ is defined in formula \eqref{eq31}. \\
Let's calculate the term $\PHI_\alpha$.
\begin{equation}
\label{eq32}
\begin{split}
\PHI_\alpha=\mathbb{R}_{\alpha}^{-1} \mathbb{R}_{\alpha\beta}=
\begin{bmatrix} R_\alpha^\Delta & R_\alpha^0 \\ R_\alpha^0 & R_\alpha \end{bmatrix} \cdot &
\begin{bmatrix} R_{\alpha\beta} & R_{\alpha\beta}^0 \\ R_{\alpha\beta}^0 & R_{\alpha\beta}^\Delta \end{bmatrix} \cdot
d_\alpha^{-1},\\ &d_\alpha:=V_0(2-V_0)R_\alpha^2.
\end{split}\end{equation}
So that
\begin{equation}
\label{eq34}
\PHI_\alpha=\begin{bmatrix}1&-V\\0&0\end{bmatrix}\cdot
R_{\alpha\beta}R_{\alpha}^{-1}.
\end{equation}
Next, using \eqref{eq17}, one obtains
\begin{equation}
\label{eq35}
\PHI_\beta^*=\mathbb{R}_\beta^{-1}\mathbb{R}_{\alpha\beta}^*=
\begin{bmatrix}1&-V_0\\0&0\end{bmatrix}\cdot R_{\alpha\beta}R_{\beta}^{-1}.
\end{equation}
So
\begin{equation}
\label{eq36}
\begin{split}
\PHI_\alpha\cdot\PHI_\beta^*&=\begin{bmatrix}1&-V\\0&0\end{bmatrix}
\cdot\begin{bmatrix}1&-V_0\\0&0\end{bmatrix}
\cdot R_{\alpha\beta}^2R_{\alpha}^{-1}R_{\beta}^{-1}=\\
&=\begin{bmatrix}1&-V_0\\0&0\end{bmatrix}\cdot
R_{\alpha\beta}^2R_{\alpha}^{-1}R_{\beta}^{-1}.
\end{split}\end{equation}
Hence
\begin{equation}
\label{eq37}
\begin{split}
\mathbb{I}-\PHI_\alpha\PHI_\beta^* &=\begin{bmatrix}1&0\\0&1\end{bmatrix}-
\begin{bmatrix}1&-V_0\\0&0\end{bmatrix}\underbrace{R_{\alpha\beta}^2R_\alpha^{-1}R_\beta^{-1}}_
{=:\Gamma_{\alpha\beta}}= \\
& =\begin{bmatrix}1-\Gamma_{\alpha\beta}&V_0\Gamma_{\alpha\beta}\\0&1\end{bmatrix}.
\end{split}
\end{equation}
So the filtering error matrix \eqref{eq31} has the following form:
\begin{equation}
\label{eq38}
\begin{split}
\GAM=\mathbb{R}_\alpha(\mathbb{I}-\PHI_\alpha\PHI_\beta^*) &=\begin{bmatrix}1&-V_0 \\-V_0&2V_0\end{bmatrix}R_\alpha^2
\cdot\begin{bmatrix}1-\Gamma_{\alpha\beta}&V_0\Gamma_{\alpha\beta}\\0&1\end{bmatrix}=\\
&=
\begin{bmatrix}1-\Gamma_{\alpha\beta}&-V_0(1-\Gamma_{\alpha\beta})\\
-V_0(1-\Gamma_{\alpha\beta})&V_0(2-V_0\Gamma_{\alpha\beta})\end{bmatrix}\cdot R_\alpha^2.
\end{split}
\end{equation}
Using the trivial identity $2-V_0\Gamma=2-V_0+V_0(1-\Gamma)$,
one has:
\begin{equation}
\label{eq39}
\begin{split}
\GAM &=R_\alpha^2 \cdot (1-\Gamma_{\alpha\beta})\cdot \begin{bmatrix}1&-V_0\\-V_0&V_0^2\end{bmatrix}+R_\alpha^2\cdot
\begin{bmatrix}0&0\\0&V_0(2-V_0)\end{bmatrix}.
\end{split}
\end{equation}
and considering the stationarity condition $\sigma_0^2=R_\alpha V_0(2-V_0)$, one obtains the following equivalence of \eqref{eq38}:
\begin{equation}
\label{eq40}
\GAM=R_\alpha^2 \cdot (1-\Gamma_{\alpha\beta})\cdot
\begin{bmatrix}1&-V_0\\-V_0&V_0^2\end{bmatrix}+
R_\alpha\cdot\begin{bmatrix}0&0\\0&\sigma_0^2\end{bmatrix}.
\end{equation}
Hence
\begin{align*}
&\Gamma_{11}=R_\alpha^2 \cdot (1-\Gamma_{\alpha\beta}),
&\Gamma_{12}=-V_0R_\alpha^2 \cdot (1-\Gamma_{\alpha\beta}),&\quad \\
&\Gamma_{21}=-V_0R_\alpha^2 \cdot (1-\Gamma_{\alpha\beta}),
&\Gamma_{22}=V_0^2R_\alpha^2 \cdot (1-\Gamma_{\alpha\beta})+
R_\alpha^2\cdot(2V_0-V_0^2).
\end{align*}

\vskip10pt
\section{The filtering empirical estimation}

\vskip10pt
\noindent In real physical observations, the condition of mutual uncorrelatedness \eqref{eq24} is practically not satisfied. Therefore, the covariance characteristics \eqref{eq24} should be taken into account in the covariance analysis of filtering, if they are nonzero. The corresponding correction terms are subject to estimates, based on the filtering equation \eqref{eq16}.
\vskip10pt

\noindent We will explore the best estimate (in the mean square sense), determined by the following empirical filtering equation:
\begin{equation}
\label{eq42}
\begin{pmatrix}\widehat{\alpha}(k), & \Delta\widehat{\alpha}(k+1)\end{pmatrix}= \PHI_\beta^T\cdot\begin{pmatrix} \beta(k) \\ \Delta\beta(k+1)\end{pmatrix} \ , \ \ k\geq0,
\end{equation}
with the empirical filtering matrix
\begin{equation}\label{eq43}
\PHI_\beta^T=\mathbb{R}_{\alpha\beta}^T(\mathbb{R}_{\beta}^T)^{-1},
\end{equation}
where
\begin{equation}
\label{eq44}
\mathbb{R}_{\alpha\beta}^T:=\begin{bmatrix} R_{\alpha\beta}^T & R_{\alpha\beta}^{0T} \\
R_{\beta\alpha}^{0T} & R_{\alpha\beta}^{\Delta T} \end{bmatrix} \ , \ \
\mathbb{R}_{\beta}^T:=\begin{bmatrix} R_{\beta}^T & R_{\beta}^{0T}, \\
R_{\beta}^{0T} & R_{\beta}^{\Delta T} \end{bmatrix}.
\end{equation}
The following  empirical covariances, corresponding to \eqref{eq18} and \eqref{eq23}, are used here:
\begin{equation}
\label{eq45}
\begin{split}
&R_{\alpha\beta}^T:=\frac{1}{T}\sum_{k=0}^{T-1}(\alpha(k)\beta(k)) \ , \ \ R_{\alpha\beta}^{0T}:=\frac{1}{T}\sum_{k=0}^{T-1}(\alpha(k)\Delta\beta(k+1)),\\ &R_{\beta\alpha}^{0T}:=\frac{1}{T}\sum_{k=0}^{T-1}(\beta(k)\Delta\alpha(k+1)) \ , \ \ R_{\alpha\beta}^{\Delta T} :=\frac{1}{T}\sum_{k=0}^{T-1}(\Delta\alpha(k+1)\Delta\beta(k+1)),\\
&R_{\beta}^T:=\frac{1}{T}\sum_{k=0}^{T-1}(\beta(k)^2) \ , \ \ R_{\beta}^{0T}:=\frac{1}{T}\sum_{k=0}^{T-1}(\beta(k)\Delta\beta(k+1)),\\ &R_{\beta}^{\Delta T}:=\frac{1}{T}\sum_{k=0}^{T-1}[(\Delta\beta(k+1))^2].
\end{split}\end{equation}
Note that for one-component correlations one has the representation
\begin{equation}\label{eq46}
R_{\beta}^{0T}=-VR_\beta^T \ , \ \ R_{\beta}^\Delta:=2VR_\beta^T.
\end{equation}
So the factor matrix
\begin{equation}
\label{eq47}
\mathbb{R}_{\beta}^T=\begin{bmatrix} R_{\beta}^T & -VR_{\beta}^T \\
-VR_{\beta}^T & 2VR_{\beta}^T \end{bmatrix}
\end{equation}
has the following inversion:
\begin{equation}
\label{eq48}
(\mathbb{R}_{\beta}^T)^{-1}=\begin{bmatrix} 2VR_{\beta}^T & VR_{\beta}^T \\
VR_{\beta}^T & R_{\beta}^T \end{bmatrix}\cdot (d_\beta^T)^{-1} \ , \ \ d_\beta^T:=(2V-V^2)(R_\beta^T)^2.
\end{equation}
Supposing that the mutual correlations \eqref{eq24} in reality are not null, the empirical covariances are connected by more complex relations, namely
\begin{equation}
\label{eq49}
\begin{split}
&R_{\alpha\beta}^{0T}=-VR_{\alpha\beta}^T+
\sigma\frac{1}{T}\sum_{k=0}^{T-1}(\alpha(k)\Delta W(k+1)),\\
&R_{\beta\alpha}^{0T}=-V_0R_{\alpha\beta}^T+
\sigma_0\frac{1}{T}\sum_{k=0}^{T-1}(\beta(k)\Delta W^0(k+1)),\\
&R_{\alpha\beta}^{\Delta T}=VV_0R_{\alpha\beta}^T-
\sigma\frac{1}{T}\sum_{k=0}^{T-1}(\alpha(k)\Delta W(k+1))-\\
& \hskip1.5cm - \sigma_0\frac{1}{T}\sum_{k=0}^{T-1}(\beta(k)\Delta W^0(k+1))+
\sigma\sigma_0\frac{1}{T}\sum_{k=0}^{T-1}(\Delta W(k+1)\Delta W^0(k+1)).
\end{split}\end{equation}

Taking into account \eqref{eq44} and \eqref{eq47}, one has
\begin{equation}
\label{eq50}
\mathbb{R}_{\alpha\beta}^T=\begin{bmatrix} R_{\alpha\beta}^T & -VR_{\alpha\beta}^{T}+A^T\\ -V_0R_{\alpha\beta}^T+B^T & VV_0R_{\alpha\beta}^T+C^T \end{bmatrix}
\end{equation}
where
\begin{equation}
\label{eq51}
\begin{split}
&A^T:=\sigma\frac{1}{T}\sum_{k=0}^{T-1}(\alpha(k)\Delta W(k+1)),\\
&B^T:=\sigma_0\frac{1}{T}\sum_{k=0}^{T-1} (\beta(k)\Delta W^0(k+1)),\\
&C^T:=-
\sigma\frac{1}{T}\sum_{k=0}^{T-1}(\alpha(k)\Delta W(k+1))
- \sigma_0\frac{1}{T}\sum_{k=0}^{T-1}(\beta(k)\Delta W^0(k+1))+\\
&\hskip5cm+\sigma\sigma_0\frac{1}{T}\sum_{k=0}^{T-1}(\Delta W(k+1)\Delta W^0(k+1)).
\end{split}\end{equation}
Now our task is to express the filtering matrix in the terms of empirical covariances.\\
By the definition \eqref{eq43} one has
\begin{equation}\label{eq52}
\PHI_\beta^T=\begin{bmatrix} R_{\alpha\beta}^T & -VR_{\alpha\beta}^{T}+A^T \\
-V_0R_{\beta\alpha}^{T}+B^T & VV_0R_{\alpha\beta}^T+C^T \end{bmatrix}\cdot
\begin{bmatrix} 2VR_{\beta}^T & VR_{\beta}^T, \\
VR_{\beta}^T & R_{\beta}^T \end{bmatrix}\cdot d_\beta^{-1}.
\end{equation}
Taking into account the relation $d_\beta^T=\mathcal{E}(R_\beta^T)^2=
\sigma\cdot R_\beta^T$, one obtains:
\begin{equation}
\label{eq54}
\begin{split}
& \PHI_{11}^T=R_{\alpha\beta}^T(R_\beta^T)^{-1}
                             +VA^T\cdot\bigl(\sigma R_\beta^T\bigr)^{-1};\\
& \PHI_{12}^T= \ \ \ \ \ \ \ \ \ \ \ \ \ \  A^T\cdot(\mathcal{E}R_\beta^T)^{-1};\\
& \PHI_{21}^T=-V_0R_{\alpha\beta}^T(R_\beta^T)^{-1}
                             +V(2B^T+C^T)\cdot(\sigma R_\beta^T)^{-1};\\
& \PHI_{22}^T= \ \ \ \ \ \ \ \ \ \ \ \ \ \ \ \ \
                             (VB^T+C^T)\cdot(\sigma R_\beta^T)^{-1}.
\end{split}\end{equation}
which one can rewrite in the matrix form as
\begin{equation}\label{eq55}
\PHI_\beta^T=\begin{bmatrix} 1 & 0 \\ -V_0 & 0 \end{bmatrix}
R_{\alpha\beta}^T (R_\beta^T)^{-1}+
\begin{bmatrix} VA^T & A^T, \\
V(2B^T+C^T) & VB^T+C^T \end{bmatrix}
(\sigma R_\beta^T)^{-1}.
\end{equation}
The empirical matrix representation \eqref{eq55} contains two terms. The first addendum defines the filtering matrix under conditions of  uncorrelatedness \eqref{eq24} of the stochastic components of signal and filter. The second addendum defines additional statistical estimates, generated by the correlation of the stochastic components of signal and filter.


\begin{thebibliography}{99}

\bibitem{DK_6}
Koroliouk D. Two component binary statistical experiments with persistent linear regression. - Theor. Probability and Math. Statist., AMS, No. 90, 2015, 103-114.

\bibitem{DK_05}
Koroliouk D., Koroliuk V.S., N.Rosato N. Equilibrium Process in Biomedical Data Analysis: the Wright-Fisher Model. - Cybernetics and System Analysis, Springer NY, 2014, vol. 50, No. 6, 890-897.

\bibitem{DK_8}
Koroliouk D.  Binary statistical experiments with persistent nonlinear regression. - Theor. Probability and Math. Statist., AMS,  No. 91, 2015, 71-80.

\bibitem{DK_19}
Koroliouk D., Koroliuk V.S., Nicolai E., Bisegna P., Stella L., Rosato N. A statistical model of macromolecules dynamics for Fluorescence Correlation Spectroscopy data analysis. - Statistics, Optimization and Information Computing (SOIC) . - Vol. 4, 2016, 233-242.

\bibitem{DK_13}
Koroliouk D. Stationary statistical experiments and the optimal  estimator for a predictable component. - Journal of Mathematical Sciences,  Vol. 214, No. 2, 2016, 220-228.

\bibitem{LIP-SH1974}
Liptser R.Sh., Shiryaev A.N. Statistics of Random Processes. II. Applications. - Springer, Berlin / Heidelberg 2001, 402 p.

\bibitem{MMS_01}
Moklyachuk M., Masyutka O., Sidei M. Minimax Extrapolation of Multidimensional Stationary Processes with Missing Observations. - Intern. Journ. Math. Models and Methods in Appl. Sci., 2018, 12, 94-105.

\bibitem{MMS_02}
Masyutka O., Moklyachuk M., Sidei M. Filtering of Multidimensional Stationary Processes with Missing Observations. - Universal Journal of Mathematics and Applications, 2019, 2(1), 24-32.

\end{thebibliography}
\end{document}